\newcommand{\referenza}{}
\newtheorem{thm}{Theorem}[section]
\newtheorem*{thm*}{Theorem \referenza}
\newtheorem{cor}[thm]{Corollary}
\newtheorem*{cor*}{Corollary \referenza}
\newtheorem*{lem*}{Lemma \referenza}
\newtheorem{prop}[thm]{Proposition}
\newtheorem*{prop*}{Proposition \referenza}
\newtheorem*{conj*}{Conjecture \referenza}
\newtheorem{rmk}[thm]{Remark}
\newtheorem{defi}[thm]{Definition}
\numberwithin{equation}{section}
\def \N {\mathbb N}
\title{Moduli space of families of positive $(n-1)$-weights}
\author{Simone Calamai}
\address[Simone Calamai]{Dipartimento di Matematica e Informatica ``Ulisse Dini''\\
Università di Firenze\\
via Morgagni 67/A\\
50134 Firenze, Italy}
\email{scalamai@math.unifi.it}
\email{simocala@gmail.com}
\keywords{weighted trees, triangular inequalities}
\thanks{The author is supported by the Project PRIN ``Varietà reali e complesse: geometria, topologia e analisi armonica'', 
by  the Simons Center for Geometry and Physics, Stony Brook University, by SIR 2014 AnHyC "Analytic aspects in complex and hypercomplex geometry" (code RBSI14DYEB),
and by GNSAGA of INdAM}
\subjclass[2010]{05C05, 05C12, 05C22}
\date{\today}
\begin{document}

\begin{abstract}
We show the geometrical structure of the moduli space of positive-weighted trees with $n$ labels $1,\ldots , n$ which realize the same family of positive $(n-1)$-weights
and we characterize them as a family of positive multi-weights.
\end{abstract}

\maketitle

\section{Introduction}
 We are concerned in studying the problem about whether a family $F$ of positive real numbers is 
 the family of the $k$-weights of some positive weighted tree $\mathcal{T}$.
The next section gives the precise meaning of our aim; here we would like to recall some relevant results in the literature which inspired 
this note, and to present our contribution. 
 
 The first result that we would like to recall tracks back to $1965$, when Hakimi and Yau, in \cite{HakimiYau}, proved the following statement about graphs:
 $F$ is the family of the $2$-weights of a positive weighted graph if and only if the elements of $F$ satisfy the triangle inequalities, that is 
 \begin{align}\label{triangle inequalities}
\forall D_{i,j},\, D_{i,k},\, D_{k,j} \in F ,\quad D_{i,j} \leq D_{i,k}  + D_{j,k}  \; . 
 \end{align}
The role of the triangle inequalities is prominent as well in the criterion, given in \cite{Buneman,SimoesPereira,Zaretskii}, concerning trees:
a family $F$ of positive real numbers that satisfy the triangle inequalities is the family of the $2$-weights of a positive weighted tree if and only 
if for all $D_{i,j},\, D_{i,k},\, D_{i,h} ,\, D_{j,k} ,\, D_{j,h}, \, D_{k,h} \in F$ the maximum value in
\begin{align}\label{equa: four points condition}
 \{ D_{i,j} + D_{h,k},\, D_{i,k} + D_{h,j},\, D_{i,h} + D_{k,j}  \}
\end{align}
is attained at least twice; \eqref{equa: four points condition} is the well-known four points condition, and it can be stated also in order to
contain, as special instances, the triangle inequalities (see for example \cite{Baldisserri}).
 
Unlike the theorem by Hakimi and Yau, the result in \cite{Buneman,SimoesPereira,Zaretskii} also provides a statement on uniqueness; 
in fact for a $F$ satisfying the four points condition, there exists a unique positive-weighted tree $\mathcal{T}$ such that $F$ is the 
family of $2$-weights of $\mathcal{T}$ \cite{DHKMS}.
We can also state the uniqueness by saying that a positive-weighted tree $\mathcal{T}$ is determined by its family of $2$-weights.

Along the same vein, a theorem by Pachter and Speyer in \cite{PachterSpeyer} asserts that a positive weighted tree $\mathcal{T}$
with $n$ leaves $1,\ldots , n$ and no vertexes of degree $2$ is determined by its $k$-weights provided that $3\leq k \leq (n+1)/2$. 
A further generalization of the theorem by Pachter and Speyer has been given by Baldisserri and Rubei in \cite{BaldisserriRubei2}.

Recently, Baldisserri and Rubei in \cite{BaldisserriRubei1} were also able to provide a neat characterization on when a family $F$ of positive numbers is the family of the
$(n-1)$-weights of a positive weighted tree $\mathcal{T}$ of exactly $n\geq 3$ vertexes labeled with $1,\ldots , n$. 
(It is a mainly open the problem to characterize when a family of positive numbers is
the family of the $k$-weights of a weighted tree, and $3\leq k \leq n-2$, see for positive results \cite{HHMS,Rubei1,Rubei2}.)
 The condition found by Baldisserri and Rubei hinges upon the following inequality
 \begin{align}\label{equa: triangle general}
  (n-2) D_{ 1,\ldots, \hat i , \ldots , n } \leq  \sum_{j=1, j\neq i}^n D_{ 1,\ldots j , \ldots , n } \; .
 \end{align}
The inequality \eqref{equa: triangle general} can be seen as a generalization of the triangle inequalities \eqref{triangle inequalities},
but unlike the case of positive $2$-weights we don't expect a uniqueness to hold, as we have $k=n-1$ while the statement
of Pachter and Speyer holds for $k\leq (n+1)/2$. 

Whence, one aim of the present note is to study the moduli space of positive weights $w$ that can be put on a labeled tree $T$ so that
a given family $F$ of positive numbers is the family of $(n-1)$-weights of the positive weighted tree $(T,w)$;
the study of moduli spaces of trees has been addresses, among others, by \cite{BHV,PS}. 
The description is given in Proposition \ref{prop:moduli spaces}, and it has two main points of interest, we believe: the first is that
given a tree $T$ with $n$ labels $1,\ldots , n$, the moduli space (possibly empty) of positive weights essentially depends on how many
leaves has $T$ and on how many edges of $T$ do not have a leaf as an endpoint.
The second element of interest of Proposition \ref{prop:moduli spaces} is that when exactly  the equality holds in 
\eqref{equa: triangle general} exactly once in $F$, then $F$ is the family of  positive $(n-1)$-weights of a unique positive weighted tree $\mathcal{T}$;
thus in this very special case we recover a uniqueness phenomenon. Moreover, in this case the tree $\mathcal{T}$ is a star. 

As an application of Proposition \ref{prop:moduli spaces} we give an algebraic characterization of the equality case in 
\eqref{equa: triangle general} in Theorem \ref{thm: characterization}; 
the theorem suggests that it could be of interest to study families of positive 
numbers which are of multi-weights rather than of $k$-weights for a fixed $k$.
Other applications of our theory are the obstructions for a family $F$ of positive weights to be the family of $k$-weights of 
a positive-weighted tree.
 
\subsection{Acknowledgments.} The author is very grateful to Agnese Baldisserri and Elena Rubei for kindly introducing him into 
the topic of weighted graphs. He also wants to thank Xiuxiong Chen for constant support.

\section{Basic definitions and notation}

Given a tree $T$ (hereafter it is always meant to be finite), let $E(T), V(T)$, and $L(T)$ be respectively the set of edges, 
the set of vertexes, and the set of leaves of $T$. 
A positive-weighted tree $\mathcal{T} = (T, w)$ is a tree $T$ endowed with a positive valued function $w : E(T) \rightarrow (0,\, \infty)$.
\begin{defi}
 Let $\mathcal{T} = (T, w)$ be a positive-weighted tree, and pick a natural number $k\in\N$. 
 Consider $i_1 , \ldots, \, i_k \in V(T)$ and define
\begin{align*}
 D_{i_1 , \ldots, \, i_k } (\mathcal{T})= \min \{ w(S) \,|\, S \mbox{ is a connected sub-tree of } T \mbox{ with } 
 S\ni i_1 , \ldots, \, i_k \}
\end{align*}
to be a $k$-weight of $\mathcal{T}$.
\end{defi}

We are going to employ the following notation.

\begin{itemize}
 \item Let $\mathbb{R}_+ = \{ x \in \mathbb{R} \, | \, x > 0 \}$.
 \item Given a finite set $A$, then $\sharp A$ is the number of elements of $A$.
 \item For any $n\in \mathbb{N}$ with $n\geq1$, let $[n]= \{1,\,\ldots,\,n\}$.
 \item Given $n\in \mathbb{N},\, n\geq 3$ and $a,b \in [n]$, then $e(a,\,b)$ denotes the edge (if exists) connecting $a$ and $b$.
 \item Let $n\in \mathbb{N},\, n\geq 3$. Given a family of real numbers $F:= \{ D_I \}_{I\in {[n] \choose n-1 }}$, the element
 $D_{1,\ldots,\hat i , \ldots , n} \in F$ is also denoted as $D_{\hat i}$ for any $i \in [n]$.
 \item given $k\in \{ 1 , 2 \ldots \} $, for a $k$-dimensional simplex we mean an open simplex, that is
 \begin{align*}
  \left\{ (x_0 , \ldots , x_k) \in \mathbb{R}^{k+1} \mbox{ such that } 0< x_i <1 \mbox{ for all } i, \sum_{i=0}^k x_i < 1 \right\}. 
 \end{align*}
 \item $0$-dimensional simplex is a point.
 \item Given a weighted tree $(T, w)$, we denote with $w(T)$ the total weight of the tree, that is the finite sum of the weight of all the edges
 of $T$.
\end{itemize}

\begin{defi}
 Let $\ell$ be a leaf of a tree $T$. Let $e$ be the edge whose endpoint is $\ell$. Then we say that $e$ is a twig
 and we write $e=\mbox{twig}_\ell$. A non-twig-edge of $T$ is an edge whose neither endpoints are leaves.
\end{defi}

\begin{defi}
 We say that a tree (which is always meant to be finite) $T$ is labeled when all its leaves have attached a label, and there possibly are vertexes of $T$
which are not leaves and which are labeled.
\end{defi}

\begin{defi}
 Let $T$ be a tree and $a \in V(T)$ be a labeled vertex of $T$. We say that $a$ is a leaf-label if in fact $a \in L(T)$, while we
 say that $a$ is a non-leaf-label if $a \not\in L(T)$.
\end{defi}

\begin{defi}
 Let $T$ be a tree. We say that $T$ is a reduced labeled tree when, if $v \in V(T)$ and $\deg_T (v) = 2$, then $v$ is labeled.
\end{defi}

\begin{defi}
 Let $k,n \in \mathbb{N}$, $2\leq k < n$, and $F= \{ D_I\}_{I\in {[n] \choose k}}$ be a family of positive numbers.
We say that $F$ is positive treelike if there exists a labeled reduced, positive-weighted tree $\mathcal{T}=(T,w)$
such that 
\begin{align*}
 D_I (\mathcal{T}) = D_I, \quad \forall I \in  {[n] \choose k}\; .
\end{align*}
In this case we also say that $\mathcal{T}$ realizes $F$.

 Moreover, we say that $F$ is positive-leaf-treelike if there exists a labeled reduced, positive-weighted tree $\mathcal{A}$ with $n$ labels
 $[n]$ such that there are no non-leaf-labels.
\end{defi}

\section{Main results}

We start with recalling some definitions and results from \cite{BaldisserriRubei2} which we are going to use in the present section.

\begin{defi}
 Let $r\in \mathbb{N}-\{ 0 \}$. We say that a tree $P$ is an $r$-pseudostar if any edge of $P$ divides $L(P)$ into two sets such that at
 least one of them has cardinality less than or equal to $r$. Notice that a $1$-peudostar is a star.
 
 We say that a tree is essential if it has no vertexes of degree $2$.
\end{defi}

\begin{defi}
 Let $n,r\in\mathbb{N}-\{ 0 \}$. Let $\mathcal{T} = (T,w)$ be a weighted tree with $L(T)= [n]$  Let $e$
 be an edge of $T$ with weight $y$ and dividing $[n]$ into two sets such that each of them has strictly 
 more than $r$ elements. Contract $e$ and add $y/(n-r)$ to the weight of every twig of the tree $T$.
 We call this operation an $r$-IO operation on $\mathcal{T}$ and we call the inverse operation an $r$-OI operation.
\end{defi}

We now recall the result in \cite{BaldisserriRubei1} which is interesting to compare with our main result.

\begin{thm}[\cite{BaldisserriRubei1}]
 Let $n\in \mathbb{N}$, $n\geq 3$ and let $F:= \{ D_I\}_{I\in {[n] \choose n-1 }}$ be a family of positive real numbers.
  \begin{enumerate}
   \item There exists a positive-weighted tree $\mathcal{T}=(T,w)$ with at least $n$ vertexes, $1,\ldots , n$ that realizes $F$ if and only if
   \begin{align}\label{equa:disug baldiss rubei}
(n-2) D_{\hat i} \leq \sum_{j \in [n] - \{ i\}} D_{\hat j}
   \end{align}
for any $i \in [n]$ and at most one of the inequalities in \eqref{equa:disug baldiss rubei} is an equality.
   \item There exists a positive weighted tree $\mathcal{T} = (T,w)$ with only leaf-labels $1,\ldots, n$ 
   that realizes $F$ if and only if
   \begin{align*}
    (n-2) D_{\hat i} < \sum_{j \in [n] - \{ i\}} D_{\hat j}
   \end{align*}
for any $i \in [n]$.
   \end{enumerate}

\end{thm}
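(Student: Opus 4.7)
My plan is to decompose each $D_{\hat i}$ according to the bipartition of $[n]$ induced by removing each edge of the realizing tree, and to read the desired inequalities off the resulting identity. For a positive-weighted tree $(T,w)$ realizing $F$, let $S_{\hat i}$ denote the Steiner subtree of $T$ spanning $[n]\setminus\{i\}$, so that $D_{\hat i}=\sum_{e\in S_{\hat i}}w(e)$. Since $T$ is reduced and all leaves of $T$ are labeled, removing an edge $e$ splits the labels as $[n]=A_e\sqcup B_e$ with $|A_e|,|B_e|\ge 1$; I call $e$ \emph{internal} when both sides contain at least two labels and \emph{twig-type} otherwise, in which case the singleton side carries a unique label $j_0(e)$. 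A direct check gives $e\in S_{\hat i}$ unless $e$ is twig-type with $j_0(e)=i$.

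For the necessity direction of (1), I would sum over $i$ to obtain
\[
\sum_{j=1}^{n}D_{\hat j}\;=\;\sum_{e}w(e)\,\sharp\{j:e\in S_{\hat j}\}\;=\;n\!\!\sum_{e\text{ internal}}\!\!w(e)\;+\;(n-1)\!\!\sum_{e\text{ twig-type}}\!\!w(e),
\]
and then, for fixed $i$, subtract $(n-1)D_{\hat i}$ edge by edge:
\[
\sum_{j\ne i}D_{\hat j}-(n-2)D_{\hat i}\;=\;\sum_{e\text{ internal}}w(e)\;+\;(n-1)\!\!\sum_{e\,:\,j_0(e)=i}\!\!w(e)\;\ge\;0,
\]
with equality iff $T$ has no internal edges and no twig-type edge with $j_0(e)=i$. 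A short lemma---any connected subtree of a reduced labeled tree having only one label must be a single vertex, since any tree with at least two vertices has at least two leaves, each of which must be labeled---says that every twig-type edge is actually the twig at a labeled leaf. So ``no internal edges'' forces $|E(T)|=|L(T)|=|V(T)|-1$, whence $T$ is a star, and the condition $j_0(e)\ne i$ pins $i$ to the (unique, labeled) center. Since this shape singles out $i$, at most one equality can occur in (1); and the same rigidity forbids equality whenever every label is a leaf, yielding the necessity in (2).

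For sufficiency I construct explicit stars from $F$. Assuming strict inequalities everywhere, I take a star with unlabeled center and leaves $1,\ldots,n$ with twig weights
\[
w_i\;:=\;\frac{\sum_{j\ne i}D_{\hat j}-(n-2)D_{\hat i}}{n-1},
\]
which are positive by hypothesis; a direct computation recovers every $D_{\hat i}$, and since this star has only leaf-labels it simultaneously proves (2). If instead equality holds at a single index $i$, I take a star centered at the labeled vertex $i$ with leaves $[n]\setminus\{i\}$ and set $w_k:=D_{\hat i}-D_{\hat k}$ for $k\ne i$; the equality at $i$ combined with the strict inequality at $k$ yields $(n-1)(D_{\hat i}-D_{\hat k})>0$, so $w_k>0$, and a direct check recovers $F$. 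The main obstacle in this program is the rigidity of the equality case, concretely the little lemma on single-label subtrees; once that is in hand the star structure and the uniqueness in (1), as well as the impossibility of equality in (2), follow from a brief edge count, and the sufficiency constructions are routine computations.
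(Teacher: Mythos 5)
Your argument is correct, and it is worth noting that the paper itself offers no proof of this statement: it is quoted from Baldisserri--Rubei, and the machinery the paper does use around it (the unique essential $1$-pseudostar realizing $F$, and the $r$-IO/$r$-OI contraction operations of \cite{BaldisserriRubei2}) is quite different from what you do. Your route is a direct double count: decompose each $D_{\hat\jmath}$ over the edge set via the bipartition of the labels induced by each edge, so that $\sum_{j\ne i}D_{\hat\jmath}-(n-2)D_{\hat i}$ becomes a nonnegative combination of edge weights whose vanishing is read off combinatorially. This buys two things at once: the inequalities (and their strictness in the leaf-label case) come for free from positivity of $w$, and the equality case immediately yields the rigidity statement (equality at $i$ forces a star with center labeled $i$, hence at most one equality), which is exactly the phenomenon the paper re-derives by hand in case (3) of Proposition \ref{prop:moduli spaces}. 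Your sufficiency constructions (the star with unlabeled center and twig weights $w_i=\bigl(\sum_{j\ne i}D_{\hat\jmath}-(n-2)D_{\hat i}\bigr)/(n-1)$, and the star centered at the labeled vertex $i$ with $w_k=D_{\hat i}-D_{\hat k}$) check out by the computations you indicate.

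One small repair is needed in your auxiliary lemma. As literally stated --- ``any connected subtree of a reduced labeled tree having only one label must be a single vertex, since any tree with at least two vertices has at least two leaves, each of which must be labeled'' --- it is false: in a star with unlabeled center, the subtree consisting of the center and one leaf has two vertices, two leaves as a subtree, but only one label, because a leaf of a subtree need not be a leaf of $T$. What you actually apply it to is a component $C$ of $T-e$, and there the statement is true, but the argument must use reducedness: at most one leaf of $C$, namely the endpoint $u$ of $e$, can fail to be a leaf of $T$; every other leaf of $C$ is a leaf of $T$, hence labeled; and if $C$ has at least two vertices but only one leaf besides $u$, then $C$ is a path from $u$, so either $C$ has exactly two vertices and $u$ has degree $2$ in $T$, or $C$ has an interior vertex of degree $2$ in $T$ --- in both cases reducedness forces a second labeled vertex. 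With the lemma restated for components of $T-e$ and proved this way, the rest of your edge count and the uniqueness of the star shape go through verbatim.
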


We now state the main proposition of this section.

\begin{prop}\label{prop:moduli spaces}
 Let $n\in\mathbb{N}, \, \geq 3$ and let $F:=\{ D_I \}_{I\in {[n] \choose n-1 }}$ be a family of positive real numbers with
 $D_{\hat 1} \leq\ldots \leq D_{\hat n}$ .  
 Let $M\in \{1,\, \ldots ,\, n\}$ be the number of distinct elements of $\{ D_I \}_{I\in {[n] \choose n-1 }}$ which have the maximum value.
 \begin{enumerate}
  \item \label{case0} Suppose that $M \in \{ n-1 , n\}$;  then the family $F$ is positive-treelike if and only if 
  \begin{align} \label{all strict inequalities}
   (n-2) D_{\hat i} < \sum_{j\in [n]-\{i\}} D_{\hat j}, \quad \forall i \in [n]. 
  \end{align}
Moreover, for any  labeled reduced tree $T$ which has only leaf-labels, the moduli space of positive weights $E(T)\to \mathbb{R}_+ $ which realize $F$
is a $N$-dimensional simplex, where $N$ is the number of non-twig-edges of $T$. In this case $F$ is not realized by other positive-weighted trees.
  \item\label{case1} Suppose that $2 \leq M\leq n-2$; then the family $F$ is positive-treelike if and only if 
  \eqref{all strict inequalities} holds.
Moreover, for any labeled reduced tree $T$ which has only leaf-labels, 
the moduli space of positive weights behaves as in the case (\ref{case0}); 
also, for any labeled reduced tree $T$ which has exactly $M$ non-leaf-labels and such that its leaves are labeled with $1,\ldots,n-M$, the moduli space of positive weights $E(T) \to \mathbb{R}_+$ 
which realize $F$ is given by an $N-1$-dimensional simplex, where $N\geq 1$ is the number of non-twig-edges of $T$.
Finally, there are no extra positive weighted trees which realize the family $F$.
 \item \label{case2} Suppose that $M=1$; then the family $F$ is positive-treelike if and only if either \eqref{all strict inequalities} holds, 
 (and in this the moduli spaces of positive weights $E(T) \to \mathbb{R}_+$ behaves exactly as in  the case (\ref{case0}))
 or
 \begin{align} \label{one equality}
   (n-2) D_{\hat c} = \sum_{j\in [n]-\{c\}} D_{\hat j} , \quad \exists ! c\in [n]
  \end{align}
 together with 
  \begin{align} \label{almost strict inequalities}
   (n-2) D_{\hat i} < \sum_{j\in [n]-\{i\}} D_{\hat j}, 	\quad \forall i\in [n]-\{c\}.
  \end{align}
In this case we observe a rigidity phenomenon: the family $F$ is
 realized by a unique positive weighted tree $\mathcal{T}=(T,w)$. More precisely, necessarily $c=n$, $T$ is a star whose center is labeled as $n$, 
 and the values of $w$ are given by
 \begin{align*}
  w(e(n,k)) = D_{\hat n} - D_{\hat k} \quad \forall k\in [n-1]\; .
 \end{align*}
 \end{enumerate}
\end{prop}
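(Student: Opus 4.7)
The plan rests on a single structural observation: in a reduced labeled tree $T$, every internal vertex lies on a path joining two leaves (degree-$2$ internal vertices are labeled by hypothesis, and each branch at a vertex of degree $\geq 3$ eventually reaches a leaf), so the Steiner subtree spanning the leaves of $T$ is $T$ itself. Combined with the definition of $D_{\hat i}$, this yields the fundamental dichotomy
\[
D_{\hat i}(\mathcal{T}) \;=\; \begin{cases} w(T) - w(\mathrm{twig}_i) & \text{if } i \text{ is a leaf-label,}\\ w(T) & \text{if } i \text{ is a non-leaf-label,}\end{cases}
\]
for any positive-weighted realization $\mathcal{T} = (T,w)$ of $F$. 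Letting $S$ denote the total weight of the non-twig-edges of $T$, a direct substitution gives
\[
\sum_{j \neq i} D_{\hat j} - (n-2)\,D_{\hat i} \;=\; \begin{cases} (n-1)\,w(\mathrm{twig}_i) + S & \text{if } i \text{ is a leaf-label,}\\ S & \text{if } i \text{ is a non-leaf-label,}\end{cases}
\]
so \eqref{equa:disug baldiss rubei} is always strict at every leaf-label, while at the non-leaf-labels it is either an equality in block (precisely when $T$ is a star, $S=0$) or strict in block ($S>0$).

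I would next classify admissible topologies via the fact that $\max_i D_{\hat i}(\mathcal{T}) = w(T)$ is attained exactly at the non-leaf-labels of $T$. If $k$ denotes the number of non-leaf-labels, then $M = k$ whenever $k \geq 1$, and the non-leaf-labels must occupy the top-$M$ entries $D_{\hat{n-M+1}}, \ldots, D_{\hat n}$ of the sorted family $F$; since a tree has at least two leaves this forces $k \in \{0, M\}$, with $k = M$ ruled out as soon as $M \in \{n-1, n\}$. For the moduli space in the only-leaf-labels case, inverting the system $D_{\hat i} = w(T) - t_i$ together with $w(T) = S + \sum_j t_j$ expresses every twig weight as an affine function of $S$, and positivity of all twigs plus of the non-twig-edge weights cuts out the open region
\[
\Bigl\{(w_e)_{e \text{ non-twig}} \in \R^{N} : w_e > 0,\ \sum_{e} w_e < K \Bigr\}, \qquad K = \sum_{j \neq n} D_{\hat j} - (n-2)\,D_{\hat n},
\]
an $N$-dimensional simplex in the sense of Section 2, non-empty precisely when \eqref{all strict inequalities} holds (the binding constraint is the one at $i=n$, and a short monotonicity check shows all the other slacks are at least $K$). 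For a tree with exactly $k = M$ non-leaf-labels placed at $n-M+1, \ldots, n$, the equations $D_{\hat c} = w(T)$ force $w(T) = D_{\hat n}$, the leaf twigs are pinned to $t_i = D_{\hat n} - D_{\hat i}$, and the identity $w(T) = S + \sum_i t_i$ fixes $S = \sum_{i=1}^{n-M} D_{\hat i} - (n-M-1)\,D_{\hat n}$, which is exactly the slack of \eqref{all strict inequalities} at $i = n$; distributing this fixed positive amount among $N \geq 1$ non-twig edges gives the $(N-1)$-dimensional simplex.

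Finally, the rigidity in the equality sub-case of item (\ref{case2}) is forced by the same analysis: equality at $c$ combined with $M=1$ implies $c=n$ (otherwise some $i > c$ would violate the strict version, by the identity $(n-2)(D_{\hat i} - D_{\hat c}) = (\sum_{j\neq i} - \sum_{j\neq c}) D_{\hat j} + (n-1)(D_{\hat i} - D_{\hat c})$ derived from Baldisserri–Rubei); no only-leaf-label realization can exist, since $t_n$ would satisfy $(n-1)t_n = -S \leq 0$; and any realization with non-leaf-labels must have $k=1$ with label $n$ on the internal vertex, so $S=0$ forces $T$ to have no non-twig-edges and therefore to be a star centered at $n$, whence the formula $w(e(n,k)) = D_{\hat n} - D_{\hat k}$ is immediate from the twig identity. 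The step I expect to require the most careful bookkeeping is ruling out extraneous topologies: one must verify both that the non-leaf-labels cannot be placed on entries outside the top $M$ of $F$ and that no realization with $N \geq 1$ survives the equality case. Both reductions, however, ride on the sign analysis of $S$ and the $t_i$ in the linear system above, so once the fundamental dichotomy is in hand, the rest of the proof is essentially careful case accounting.
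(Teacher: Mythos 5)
Your proposal is correct, and it reaches the proposition by a genuinely more self-contained route than the paper. The paper's proof shares your opening observation (the non-leaf-labels are exactly the indices attaining the maximal value $w(T)$, so a realizing tree has either $0$ or $M$ non-leaf-labels), but from there it leans on the cited machinery: existence comes from Theorem 9 of \cite{BaldisserriRubei1}, the candidate weights are written as perturbations of the unique essential $1$-pseudostar of \cite{BaldisserriRubei2}, and the uniqueness-of-structure step is obtained by contracting all non-twig edges through $1$-OI operations, invoking the uniqueness theorem of \cite{BaldisserriRubei2} for the resulting pseudostar, and reversing the operations. You instead prove everything from the fundamental dichotomy $D_{\hat i}=w(T)-w(\mathrm{twig}_i)$ (leaf-label) versus $D_{\hat i}=w(T)$ (non-leaf-label) and a direct inversion of the resulting linear system: the twig weights become affine functions of the total non-twig weight $S$, positivity cuts out exactly the simplices in the statement, and the equality case forces $S=0$ and hence the rigid star; this also yields the equivalence with \eqref{all strict inequalities} without quoting the existence theorem, and your slack constants agree with the corrected form of the paper's formulas (the displayed coefficients $-nD_{\hat j}$ and $-nD_{\hat n}$ there should read $-(n-2)D_{\hat j}$ and $-(n-2)D_{\hat n}$). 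Two steps you compress should be written out: the leaf-label half of the dichotomy is precisely where reducedness enters (for an edge $e\neq\mathrm{twig}_i$, the component of $T-e$ containing the leaf $i$ either contains another leaf or consists of degree-$2$, hence labeled, vertices, so every edge except $\mathrm{twig}_i$ is forced into the Steiner subtree of $[n]-\{i\}$); and the claim $N\geq 1$ for a tree carrying $M\geq 2$ non-leaf-labels needs the one-line remark that two distinct labeled internal vertices are joined by a path all of whose edges are non-twig.
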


\begin{proof}
 We first claim that, if $F$ is realized by a positive weighted tree $\mathcal{T}=(T,w)$ and $T$ has at least a non-leaf-label, then the exact number of
 non-leaf-labels of $T$ is $M$. In fact, we have that if  $k$ is a non-leaf-label, then
 $D_{\hat k} = w(T)$, while if $h$ is a leaf-label, then $D_{\hat h} > w(T)$. 
 As consequence, if $M = \{ n-1 , n \}$, then $F $ can possibly be realized only by positive-weighted trees with $n$ leaves; otherwise,
 $F$ can be possibly realized by positive-weighted trees of either $n$ or $n-M$ leaves.
 Let us consider the case when $M\in \{ n-1 , n \}$. By means of \cite[Theorem 9 (b)]{BaldisserriRubei1}, we know that $F$ is treelike if and only if
 \eqref{all strict inequalities} holds. 
 Pick a tree $T$ with labels $[n]$. 
 We already argued that if $T$ has a non-leaf-label, then $T$ cannot be
 endowed with a positive weight that realizes $F$. 
 Whence, assume that $T$ has only leaf-labels; also, assume that $T$ is reduced and has $N\geq 0$ non-twigs-edges $e_1 , \ldots , e_N$,
 and exactly $n$ leaves $1,\ldots , n$.
 Let $(\tilde T , \tilde w)$ the unique positive-weighted essential $1$-pseudostar with $n$ leaves $1, \ldots , n$ that realizes $F$, by means of 
 \cite{BaldisserriRubei2}.
By how we ordered $D_{\hat 1}, \ldots , D_{\hat n}$, we observe that there holds
\begin{align*}
 \tilde w (\mbox{twig}_1) \geq \ldots \geq  \tilde w (\mbox{twig}_n) 
\end{align*}
and 
\begin{align*}
  \tilde w (\mbox{twig}_j) = \frac{1}{n-1} \cdot \left( -n D_{\hat j}+ \sum_{k=1 , k\neq j}^n D_{\hat k}\right) \; .
\end{align*}
We define the  weight $w:E(T) \to \mathbb{R}$ given by
\begin{align}\label{equa:definition of positive weight for N simplex}
 w(\mbox{twig}_j) := \tilde w (\mbox{twig}_j) - \frac{1}{n-1} \cdot \sum_{k=1}^{N} w(e_k) = 
 \frac{1}{n-1} \cdot \left(\left( \sum_{\ell=1 , \ell \neq j}^n D_{\hat \ell}\right) - nD_{\hat j} - \sum_{k=1}^{N} w(e_k)   \right), \; \forall j\in [n] 
\end{align}
and 
\begin{align}\label{equa: equation simplex open}
 w(e_\ell) \in (0,1), \, \sum_{\ell = 1}^N w(e_\ell) < D_{\hat 1} + \ldots + D_{\hat{n-1}} - nD_{\hat n} = (n-1)\tilde w (\mbox{twig}_n) .
\end{align}
we have to verify that the values of $w$ are actually positive and that $\mathcal{T}=(T,w)$ realizes $F$.

Since $(n-1)\tilde w (\mbox{twig}_n) > 0$, then it is always possible to choose $w(e_1), \ldots , w(e_N)$ such that they are all positive
and satisfy \eqref{equa: equation simplex open}.
Moreover, for $j\in [n]$ there holds
\begin{align*}
 w(\mbox{twig}_j) = \tilde w (\mbox{twig}_j) - \frac{1}{n-1} \cdot \sum_{k=1}^{N} w (e_k) > 
 \tilde w (\mbox{twig}_j) - \tilde w (\mbox{twig}_n) \geq 0.
\end{align*}
Whence, the weight $w:E(T) \to \mathbb{R}$ is in fact a positive-weight.

It remains to prove that $\mathcal{T}$ realizes $F$.
Let $j\in [n]$; then 
\begin{align*}
D_{\hat j} (\mathcal{T}) &= \sum_{k=1}^N  w(e_k) + \sum_{\ell = 1 , \ell \neq j}^n w(\mbox{twig}_\ell)\\
 &= \sum_{k=1}^N  w(e_k) + \sum_{\ell = 1 , \ell \neq j}^n \left( \tilde w(\mbox{twig}_\ell) - \frac{1}{n-1} \cdot \sum_{h=1}^N w(e_h)\right)\\
 &=  \sum_{\ell = 1 , \ell \neq j}^n \tilde w(\mbox{twig}_\ell) \\
 &= D_{\hat j} (\widetilde{ \mathcal{T}}) = D_{\hat j} \; . 
\end{align*}
Whence $\mathcal{T}$ realizes $F$.

It remains to show that given a positive weight $\bar w$ on $T$ which realizes $F$, then $\bar w$ has the structure as in
\eqref{equa:definition of positive weight for N simplex} and \eqref{equa: equation simplex open}.
 In fact, starting from $(T, \bar w)$ we can perform $N$ operations of type $1$-OI 
 contracting $e_1 , \ldots , e_N$.
 We thus obtain a sequence of weighted trees, for $j = 0 , \ldots , N$, $\mathcal{T}^{j} = (T^{(j)} , {\bar w}^{(j)})$ ,
 where we set $\mathcal{T}^{(0)} = (T , \bar w )$, ${\bar w}^{(j)}$ is a positive weight for all $j=0,\ldots , N$,
 $\mathcal{T}^{(j)}$ realizes $F$ for all $j=0,\ldots , N$, and $T^{(N)}$ is an essential $1$-pseudostar tree. 
 By means of \cite[Theorem 16 page 13]{BaldisserriRubei2}, it necessarily holds that $(T^{(N)} , {\bar w}^{(N)} ) = (\tilde T , \tilde w)$.
 Whence, reversing the $N$ operations $1$-IO performed, we get a way to obtain $(T, \bar w)$ from $(\tilde T , \tilde w)$; there follows that 
 the structure of $\bar w$ is necessarily as in \eqref{equa:definition of positive weight for N simplex} and \eqref{equa: equation simplex open}
 for suitable positive choices of $w(e_1), \ldots , w(e_N)$.
  
  Finally, it is evident from \eqref{equa: equation simplex open} that the moduli space of positive weights on $T$ which realize $F$ is
   an $N$-dimensional simplex.
  This concludes the proof of (\ref{case0}).
  
 About the case (\ref{case1}), if we start from a labeled reduced tree $T$ with no non-leaf-labels, the argument is in the same vein as in case (\ref{case0}).
  Thus let us consider a labeled reduced tree $T$ with some non-leaf-labels; as we argued at the beginning of the proof, in order to realize $F$ as a
  positive-tree, $T$ must have exactly $n-M$ leaves; by means of how we ordered $D_{\hat 1},\ldots , D_{ \hat n}$ we have to label the leaves 
  of $T$ with labels $1,\ldots , n-M$ (in any order). Now we claim that $T$ has at least a non-twig-edge, that is $N\geq 1$. Indeed,
  if we suppose by contradiction that all the edges of $T$ are twigs, then $T$, as a non-labeled tree, would have the structure of a $1$-pseudostar
  with exactly $n-M$ leaves; if $M > 1$, this immediately produces a contradiction.
  It remains the possibility that $M=1$, which falls in case (\ref{case2}). This completes the proof of the claim.

We consider the subfamily $\tilde F = \{ D_{\hat 1 }, \ldots , D_{\widehat{n-M}} \} \subset F$, and we regard
$D_{\hat 1 }, \ldots , D_{\widehat{n-M}}$ as a $(n-M-1)$-weights on $[n-M]$. We claim that $\tilde F$
is positive-leaf-treelike. By means of \cite{BaldisserriRubei1}, it suffices to prove that 
\begin{align} \label{equa: equa in claim 2 case 1}
 (n-M-2)D_{\hat j} < \sum_{k=1 , k\neq j}^{n-M} D_{k}\, , 
\end{align}
for all $ j\in [n-M]$. We suppose by contradiction that 
\begin{align*}
 (n-M-2) D_{n-M-2} \geq \sum_{k=1}^{n-M-1} D_{\hat k}\; .
\end{align*}
We know that
\begin{align*}
 (n-2) D_{\hat n} < D_{\hat 1} + \ldots + D_{\widehat{n-M}} +  D_{\widehat{n-M+1}} + \ldots +  D_{\widehat{n-1}}   
\end{align*}
 from which, since we ordered such that $ D_{\widehat{n-M+1}} = \ldots =  D_{\widehat{n-1}} = D_{\hat n} $,
 \begin{align*}
 (n-M-1) D_{\hat n} < D_{\hat 1} + \ldots + D_{\widehat{n-M-1}} +  D_{\widehat{n-M}} \leq (n-M-2+1) D_{\widehat{n-M}}    
 \end{align*}
 which gives a contradiction, being $D_{\hat n} = D_{\widehat{n-M}}$.
 Moreover, it is clear that if \eqref{equa: equa in claim 2 case 1} holds for $j=n-M$, then it also holds
 for all $j \in [n-M]$. This completes the proof of the present claim.
 
 As a consequence, we can now apply \cite[Theorem 16 page 12]{BaldisserriRubei2}, and we let 
 $\widetilde{\mathcal{T}} = (\tilde T , \tilde w)$ be the unique essential $1$-pseudostar tree with $n-M$ leaves $1,\ldots , n-M$
 positively weighted, that realizes $\tilde F$.
 
 The next claim is that the weight $w: E(T) \to \mathbb{R}$ given by
 \begin{align}\label{equa: positive weight case 1 part 1}
  w(\mbox{twig}_j) = \tilde w (\mbox{twig}_j)  - \frac{1}{n-M-1} \cdot \sum_{k=1}^N w(e_k) = D_{\hat n} - D_{\hat j}, \; 
  \forall j\in [n-M]  
 \end{align}
 and
 \begin{align}\label{equa: positive weight case 1 part 2}
  & w(e_\ell ) \in (0,1)\,  \forall \ell \in [N] \,, \\ \nonumber &\sum_{\ell = 1}^N w(e_\ell) = (n-M-1) (\tilde w( \tilde T) - D_{\hat n}) 
  = -(n-M-1)D_{\hat n} + \sum_{j=1}{n-M} D_{\hat j}
 \end{align}
is in fact a positive weight and it realizes $F$. We start with observing that $\tilde w (\tilde T) - D_{\hat n} > 0$; in fact 
\begin{align*}
 \tilde w (\tilde T) = \frac{1}{n-M-1} \cdot (D_{\hat 1} + \ldots + D_{\widehat{n-M}})  
\end{align*}
and whence the equation \eqref{equa: equa in claim 2 case 1} for $j=n$ precisely tells that 
$\tilde w (\tilde T) - D_{\hat n} > 0$.
Moreover, to check  that $w(\mbox{twig}_j)> 0$ for all $j \in [n-M]$, it is enough to observe that 
\begin{align*}
 \tilde w (\tilde T) - D_{\hat n} < \tilde w (\mbox{twig}_{n-M})\; .
\end{align*}
Indeed, $\tilde w (\mbox{twig}_{n-M}) = \tilde w (\tilde T) - D_{\widetilde{n-M}}$ and we ordered the $(n-1)$-weights so that
$D_{\hat n} > D_{\widetilde{n-M}}$.
Finally, we compute for all $j \in [n]$ the value of $D_{\hat j}((T,w))$; we start with the case $j\in [n-M]$ and we have
\begin{align*}
 D_{\hat j}((T,w)) &= \sum_{k=1}^{N} w(e_k) + \sum_{\ell=1 , \ell\neq j}^{n-M} w(\mbox{twig}_\ell) \\
 &= \sum_{\ell=1 , \ell\neq j}^{n-M} \tilde w(\mbox{twig}_\ell)\\
 &= D_{\hat j} ((\tilde T , \tilde w)) \\
 &= D_{\hat j} \; .
\end{align*}
 In the case when $j= n-M +1 ,\ldots , n$ then we compute
\begin{align*}
 D_{\hat j}((T,w)) &= w(T) =  \sum_{k=1}^{N} w(e_k) + \sum_{\ell=1 }^{n-M} w(\mbox{twig}_\ell) \\
 &= \sum_{\ell=1 , \ell\neq j}^{n-M} \tilde w(\mbox{twig}_\ell) - \frac{1}{n-M-1} \cdot \sum_{k=1}^N w(e_k) \\
 &= \tilde w(\tilde T) - ( \tilde w (\tilde T) - D_{\hat n} )  \\
 &= D_{\hat n} = D_{\hat j} \, ,
\end{align*}
for how we ordered the $(n-1)$-weights.

To conclude the proof of case (\ref{case1}), we notice that arguing as in case (\ref{case0}) we can show that 
if $\bar w$ is a positive weight on $T$ that realizes $F$, then $\bar w$ has the structure as in \eqref{equa: positive weight case 1 part 1}
and \eqref{equa: positive weight case 1 part 2}. It is also clear that such positive weights form an $(N-1)$-dimensional simplex.

Let us finally handle the case (\ref{case2}). Pick any labeled refined tree $T$ with only leaf-labels; then an argument in the spirit of case (\ref{case0})
gives the claimed conclusion in the case when \eqref{all strict inequalities} holds. 

Whence, it remains to discuss the case of a labeled reduced tree $T$ with some non-leaf-labels that would realize $F$ as a positive-weighted tree.
We notice one possible solution $\mathcal{A}= (A, p)$ of our problem is already discussed in \cite[Theorem 9 page 4]{BaldisserriRubei1}, 
so we want to prove that $\mathcal{A}$ is the unique reduced positive weighted tree which realizes $F$.
First of all, we claim that the only positive weight $w$ that one can put on the tree $A$ in order to realize $F$ is precisely $p$.
 In fact, for such  $w$ we have 
 \begin{align}
  w(\mbox{twig}_j) = D_{\hat n} - D_{\hat j} \; \forall j \in [n-1] \; .
 \end{align}
Moreover, since $E(A) = \{ \mbox{twig}_1 ,  \ldots , \, \mbox{twig}_{n-1} \}$, we have that 
$w : E(A) \to \mathbb{R}_{+}$ is uniquely determined by $\{ D_{\hat 1} , \ldots ,\, D_{\hat n} \}$, and whence $w$
must coincide with $p$.

Now we suppose that there exists a labeled reduced tree $T$, with $T\neq A$, which realizes the family $F$ by means of a positive weight $w$.
We aim to get to a contradiction. We start with claiming that, ordering the $(n-1)$-weights as
\begin{align*}
 D_{\hat 1} \leq  \ldots \leq D_{\widehat{n-1}} < D_{\hat n}\, , 
\end{align*}
then the label $n$ is placed on a vertex of $T$ which is not a leaf. In fact, if the label $n$ was placed on a leaf of $T$, then also the labels
$1, \ldots , n-1$ would be placed on leaves of $T$, and the fact that all the labels $1,\ldots , n$ are placed on leaves 
implies that 
\begin{align*}
 D_{\hat j} = w(T) - w(\mbox{twig}_j) 
\end{align*}
and hence 
\begin{align*}
 w(T) - w(\mbox{twig}_n) = D_{\hat n} = \frac{D_{\hat 1}+ \ldots + D_{\widehat{n-1}}}{n-2} 
 = \frac{(n-1) w(T) - \sum_{j=1}^{n-1} w(\mbox{twig}_j)}{n-2} > w(T)
\end{align*}
which gives a contradiction.

 As consequence of the above claim we have that $D_{\hat n} = w(T)$; in fact, $L(T) \subset [n-1]$ and $D_{\hat n} = D_{1,\ldots , n-1}$.
 
 The next claim is that the labels from $1$ to $n-1$ are placed on leaves of $T$. I fact, if a label $k \in [n-1]$ would be placed
 on a non-leaf vertex of $T$, then one would have $D_{\hat k} = D_{\hat n}$, against our assumption $M=1$.
 
 Now we claim that all the edges of $T$ are twigs. In fact, if by contradiction $T$ would have a non-twig-edge of weight, say, $x>0$, then 
 \begin{align*}
  w(T) \geq w(\mbox{twig}_1) + \ldots + w{ \mbox{twig}_{n-1}} + x
 \end{align*}
 but 
\begin{align*}
  w(T) = D_{\hat n} = \frac{\sum_{j=1}^{n-1} D_{\hat j} }{n-2} = \frac{(n-1)w(T) - \sum_{1}^{n-1} w(\mbox{twig}_j)}{n-2} \geq w(T) + \frac{x}{n-2} > w(T)
 \end{align*}
which gives a contradiction.
Summarizing,  $T$ as labeled reduced tree has $n-1$ leaves $1,\ldots , n-1$, the label $n$ is placed on a non-leaf vertex,
and all the edges of $T$ are twigs; this amounts to say that $T=A$ and shows the contradiction we were looking for.
This completes the discussion of the case (\ref{case2}) and of all the proposition.

\end{proof}

\begin{rmk}
 Notice that Proposition \ref{prop:moduli spaces} can be seen as a quantitative version of \cite[Theorem 9 at page 4]{BaldisserriRubei1}.
 In particular the rigidity phenomenon emphasized in the statement of Proposition \ref{prop:moduli spaces} will be of particular interest in 
 the present note.
\end{rmk}

\begin{rmk}
 It would be nice to get a geometrical description of the moduli space of all the positive-weighted trees 
 corresponding to a given family $F$ of positive $(n-1)$-weights. In fact, Proposition \ref{prop:moduli spaces} rather gives the moduli spaces of 
 positive-weights that can be put on a tree in order to realize $F$. 
\end{rmk}

\begin{cor}\label{cor: corollary n-1 case}
 Let $n\in\mathbb{N}, \, \geq 3$ and let $F:=\{ D_I \}_{I\in {[n] \choose n-1 }}$ be a family of positive real numbers for which  there hold 
 \eqref{one equality} and \eqref{almost strict inequalities}. Then the family $F$ is equivalent to the family of positive two-weights
 $F_0 = \{ D_{ab}\}_{1\leq a < b\leq n}$ for which 
 \begin{align*}
  D_{ij} = D_{ci} + D_{cj}, \quad \exists c\in[n]\forall i,j\in[n]-\{c\}.
 \end{align*}
The correspondence between the two families is the following
 \begin{align}\label{equa: correspondence1}
  D_{\hat i} &= \sum_{j\in [n]-\{i,c\}}D_{cj}, \quad \forall i\in [n]-\{ c \} \\
 \nonumber D_{\hat c} &= \sum_{j\in [n]-\{c\}}D_{cj}
 \end{align}
 and, conversely,
 \begin{align}\label{equa:correspondence2}
  D_{ic} &= D_{\hat c} - D_{\hat i}, \quad \forall i\in[n]-{c} \\
  \nonumber D_{ij} &= 2 D_{\hat c} - D_{\hat i} - D_{\hat j}, \quad \forall i \neq j\in[n]-{i,j}. 
 \end{align}

\end{cor}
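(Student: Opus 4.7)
The plan is to read off the 2-weight family directly from the unique star provided by Proposition \ref{prop:moduli spaces}(\ref{case2}). Under the hypotheses \eqref{one equality} and \eqref{almost strict inequalities}, that proposition (applied after permuting labels so that $c$ plays the role of $n$) furnishes a unique positive-weighted tree $\mathcal{T} = (T, w)$ realizing $F$, where $T$ is a star centered at the vertex labeled $c$, with $w(e(c, k)) = D_{\hat c} - D_{\hat k}$ for each $k \in [n] - \{c\}$, all strictly positive by \eqref{almost strict inequalities}.

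From this star I would simply compute the 2-weights: for $i \neq c$, $D_{ic} = w(e(c,i)) = D_{\hat c} - D_{\hat i}$, and for distinct $i, j \in [n] - \{c\}$ the minimal subtree containing both must pass through the center, giving $D_{ij} = w(e(c,i)) + w(e(c,j)) = 2 D_{\hat c} - D_{\hat i} - D_{\hat j}$. These are the formulas \eqref{equa:correspondence2}, and the identity $D_{ij} = D_{ci} + D_{cj}$ for $i, j \neq c$ is then transparent. The inverse formulas \eqref{equa: correspondence1} follow by observing that $D_{\hat c} = w(T) = \sum_{k \neq c} w(e(c,k)) = \sum_{k \neq c} D_{ck}$, while for $i \neq c$ the $(n-1)$-weight $D_{\hat i}$ is computed on the subtree obtained by deleting the twig at $i$, so $D_{\hat i} = w(T) - w(e(c,i)) = \sum_{j \in [n] - \{i, c\}} D_{cj}$.

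Conversely, I would verify that any family of positive 2-weights $F_0 = \{D_{ab}\}_{1 \leq a < b \leq n}$ satisfying $D_{ij} = D_{ci} + D_{cj}$ for some fixed $c$ and all distinct $i, j \in [n] - \{c\}$ defines, via \eqref{equa: correspondence1}, a family of $(n-1)$-weights fulfilling \eqref{one equality} and \eqref{almost strict inequalities}. Summing \eqref{equa: correspondence1} over $i \in [n] - \{c\}$ and exchanging the order of summation yields $\sum_{i \neq c} D_{\hat i} = (n-2) \sum_{j \neq c} D_{cj} = (n-2) D_{\hat c}$, which is precisely \eqref{one equality}. For each $i \in [n] - \{c\}$, substituting \eqref{equa: correspondence1} into the strict inequality \eqref{almost strict inequalities} reduces it, after cancellation, to $(n-1) D_{ci} > 0$, which holds by positivity of the 2-weights.

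No serious obstacle arises: the heavy lifting has already been done by Proposition \ref{prop:moduli spaces}(\ref{case2}). The only care needed is the bookkeeping that matches the edge weights of the unique star against the displayed formulas, and the observation that the linear substitutions \eqref{equa: correspondence1} and \eqref{equa:correspondence2} are mutually inverse, so that the correspondence between the two families is a genuine bijection rather than merely a one-way assignment.
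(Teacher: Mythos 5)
Your proposal is correct and follows essentially the same route as the paper: both proofs rest on Proposition \ref{prop:moduli spaces}, case (\ref{case2}), which gives the unique star centered at $c$ with $w(e(c,k)) = D_{\hat c} - D_{\hat k}$, and then read off the two-weight formulas \eqref{equa:correspondence2} and their inverses \eqref{equa: correspondence1} directly from that star. The only divergence is in the converse direction, where the paper invokes the uniqueness of the tree realizing a two-weight family from \cite{DHKMS}, whereas you verify algebraically that an additive family $F_0$ produces, via \eqref{equa: correspondence1}, an $(n-1)$-weight family satisfying \eqref{one equality} and \eqref{almost strict inequalities} and that the two substitutions are mutually inverse -- an equally valid and somewhat more self-contained check.
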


\begin{proof}
 Given $F$ as in the statement, by means of Proposition \ref{prop:moduli spaces} we know that $F$ is realized only by the positive-weighted tree
 $\mathcal{T} = (T,w)$ 
 such that $T$ is a star whose center is labeled as $c$,  and the values of $w$ are given by
 \begin{align*}
  w(e(c,k)) = D_{\hat c} - D_{\hat k} \quad \forall k\in [n]-\{c\}.
 \end{align*}
 Whence, it is easy to see that the positive-weighted tree $\mathcal{T}$ gives the family of positive two-weights $F_0$. 
 
 Conversely, given the family $F_0$, it is now clear that one positive-weighted tree that realizes $F_0$ is $\mathcal{T}$ as here above.
 Moreover, thanks to \cite{DHKMS} we know that any family of positive two-weights can be realized by at most a unique 
 positive-weighted tree. Whence the families $F$ and $F_0$ can be put in correspondence and it can be easily verified that the correspondence is
 exactly that one expressed in both \eqref{equa: correspondence1} and \eqref{equa:correspondence2}. This completes the proof of the corollary.
 \end{proof}
 
 \begin{rmk}
  The interest of Corollary \ref{cor: corollary n-1 case} is that now we can give to the family of positive $(n-1)$-weights $F$ two different
  interpretations: the first one is geometrical, and it is captured by the description of the positive-weighted star tree $\mathcal{T}$ which realizes
  $F$. The second one has an algebraic flavor: we can replace the family $F$ with a family of positive two-weights. Notice that, by means of
  Proposition \ref{prop:moduli spaces}, such a correspondence does not exist for any other family of positive $(n-1)$-weights.
  Notice that by means of \cite{PachterSpeyer}, we could state in Corollary \ref{cor: corollary n-1 case} a correspondence between 
  the family $F$ and a family $F_k$ of positive $k$-weights, where $3\leq k \leq (n+1)/2$. 
 \end{rmk}

\begin{thm}\label{thm: characterization}
 Let $m,k\in \mathbb{N}, 2\leq k < m $,  and $F= \{D_I\}_{I \in {[m] \choose k}}$ be a family of positive real numbers.
 Suppose that 
 \begin{align} \label{equa: subequality}
  (k-1) D_{a_1 , \ldots , \hat a_{k+1} } = \sum_{j=1}^k D_{a_1, \ldots , \hat a_j , \ldots , a_{k+1}}, \quad \exists a_1 , \ldots a_{k+1} \in [m]. 
 \end{align}
Then the family $F$ is positive-treelike if and only if the family 
\begin{align}\label{family mixed}
 F_1 &:= F \cup \{ D_{a_i, a_j}\}_{1\leq i < j \leq k+1} \\
 \nonumber &D_{a_i, a_{k+1}} = D_{a_1 , \ldots , \hat a_{k+1} } - D_{a_1, \ldots , \hat a_i , \ldots , a_{k+1}}, \quad \forall i\in [k]\\
 \nonumber &D_{a_i, a_j} = 2D_{a_1 , \ldots , \hat a_{k+1} } - D_{a_1, \ldots , \hat a_i , \ldots , a_{k+1}}- D_{a_1, \ldots , \hat a_j , \ldots , a_{k+1}},
 \quad \forall i<j\in [k]
\end{align}
is positive-treelike, and moreover they are realized by exactly the same positive-weighted trees.
\end{thm}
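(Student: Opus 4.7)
The plan is to reduce the problem to the rigidity statement of Proposition \ref{prop:moduli spaces} (case \ref{case2}) applied to the $(k{+}1)$-subfamily indexed by $\{a_1,\ldots,a_{k+1}\}$, and then read off the desired $2$-weight formulas from the resulting star. One direction is trivial: since $F\subset F_1$ by construction, any positive-weighted tree realizing $F_1$ also realizes $F$, and conversely it suffices to show that every tree realizing $F$ automatically assigns the values in \eqref{family mixed} as its corresponding $2$-weights.

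So suppose $F$ is realized by some positive-weighted $\mathcal{T}=(T,w)$. First I would pass from $\mathcal{T}$ to the reduced labeled tree $(T',w')$ obtained by restricting $T$ to the minimal subtree spanning $\{a_1,\ldots,a_{k+1}\}$ and contracting every unlabeled degree-$2$ vertex (summing weights along contracted paths). By the minimality of spanning subtrees, the minimal subtree of $T$ containing any $k$-subset of $\{a_1,\ldots,a_{k+1}\}$ lies inside this spanning subtree, and the contractions do not change its total weight. Hence $(T',w')$ is a reduced positive-weighted tree with labels $a_1,\ldots,a_{k+1}$ that realizes the $(k+1)$-element $k$-weight family $\tilde F := \{D_{a_1,\ldots,\hat a_j,\ldots,a_{k+1}}\}_{j=1}^{k+1}$.

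Next I would verify that $\tilde F$ lies in the regime of case (\ref{case2}) of Proposition \ref{prop:moduli spaces} with distinguished index $c=a_{k+1}$. The equality \eqref{equa: subequality} is exactly \eqref{one equality} for this $c$. For the remaining conditions, I would argue that $D_{a_1,\ldots,\hat a_{k+1}}$ must be the unique maximum of $\tilde F$: if some $D_{a_1,\ldots,\hat a_i,\ldots,a_{k+1}}$ with $i\in[k]$ were $\geq D_{a_1,\ldots,\hat a_{k+1}}$, then substituting \eqref{equa: subequality} into the generalized triangle inequality for index $i$ (which holds because $\tilde F$ is realized by $(T',w')$, via the Baldisserri--Rubei theorem quoted in the excerpt) would force a second equality in \eqref{equa:disug baldiss rubei}, contradicting the fact that $\tilde F$ is realized. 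A parallel manipulation yields strict inequality \eqref{almost strict inequalities} for every $i\in[k]$. Thus $M=1$ and Proposition \ref{prop:moduli spaces} case (\ref{case2}) applies, forcing $(T',w')$ to be the star centered at $a_{k+1}$ with edge weights $w'(e(a_{k+1},a_i)) = D_{a_1,\ldots,\hat a_{k+1}} - D_{a_1,\ldots,\hat a_i,\ldots,a_{k+1}}$ for $i\in[k]$.

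Finally, since the unique $a_i$--$a_j$ path in $\mathcal{T}$ is contained in the spanning subtree and is preserved by the reduction to $T'$, the $2$-weights $D_{a_i,a_j}(\mathcal{T})$ coincide with those of the star $(T',w')$; reading these off the star (as in Corollary \ref{cor: corollary n-1 case}) gives exactly the expressions \eqref{family mixed}. Hence $\mathcal{T}$ realizes $F_1$, and reversing the argument shows that the set of realizing trees is the same for $F$ and $F_1$. The main obstacle I expect is the careful verification that the restricted subfamily satisfies exactly the $M=1$ hypothesis of Proposition \ref{prop:moduli spaces}(\ref{case2})---i.e.\ ruling out additional equalities---together with the mild but necessary bookkeeping ensuring that the tree operations of passage to the spanning subtree and of contraction of unlabeled degree-$2$ vertices preserve both the relevant $k$-weights and the $2$-weights between the $a_i$'s.
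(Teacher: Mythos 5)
Your proposal is correct and takes essentially the same route as the paper: restrict to the subfamily indexed by $\{a_1,\ldots,a_{k+1}\}$, use the rigidity case of Proposition \ref{prop:moduli spaces} together with Corollary \ref{cor: corollary n-1 case} to force the restricted subtree to be the star centered at $a_{k+1}$, and read off the two-weights in \eqref{family mixed}, the converse being immediate from $F\subseteq F_1$. The extra verifications you include (the restriction/contraction bookkeeping and the check, via the Baldisserri--Rubei inequalities, that $D_{a_1,\ldots,\hat a_{k+1}}$ is the unique maximum so that the $M=1$ equality case applies) are details the paper leaves implicit, and they are correct.
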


\begin{proof}
 $\Rightarrow$ Suppose that the family $F$ is positive treelike, realized by a positive-weighted tree $\mathcal{A}=(A,p)$.
 Also, the subfamily $F_{|{a_1 , \ldots , a_{k+1}}}$ is treelike, and by means of \eqref{equa: subequality} and of Proposition \ref{prop:moduli spaces},
 the only positive-weighted tree that realizes it is the star tree $\mathcal{T}=(T,w)$.
 Now, it is clear that  $\mathcal{A}_{|\{a_1 ,\ldots , a_{k+1}\}} = \mathcal{T}$; moreover, by means of Corollary \ref{cor: corollary n-1 case} applied 
 with $n=k+1$, the subtree $\mathcal{A}_{|\{a_1 ,\ldots , a_{k+1}\}}$ is given by the family of positive two-weights described in \eqref{family mixed}.
 Whence, the family $F_1$ is as well realized by the positive-weighted tree $\mathcal{A}=(A,p)$.
 
 $\Leftarrow$ Suppose that the family $F_1$ is positive-treelike, realized by a positive-weighted tree $\mathcal{A}=(A,p)$.
 By means of \cite{DHKMS} the subfamily of positive two-weights as in \eqref{family mixed} is realized by a unique positive-weighted
 tree $\mathcal{T}=(T,w)$; by Corollary \ref{cor: corollary n-1 case}, the information about the family of positive two-weights in \eqref{family mixed}
 is encoded in the subfamily of positive $k$-weights $\{ D_{a_1, \ldots , \hat a_j , \ldots , a_{k+1}}\}_{j=1, \ldots , k+1} \subseteq F$.
 Whence, the family $F$ is as well realized by the positive-weighted tree $\mathcal{A}=(A,p)$.
 This concludes the proof of the theorem.
\end{proof}

\end{document}